\newtheorem{remark}{Remark}
\newtheorem{theorem}{Theorem}
\newtheorem{example}{Example}
\newtheorem{lemma}{Lemma}
\newenvironment{proof}{{\bf Proof.}}{\hspace*{1mm}\hfill\rule{2mm}{2mm}}
\newtheorem{pretheorema}{{\bf Theorem}}
\newtheorem{prelem}{{\bf Theorem}}
\newtheorem{prelemlem}{{\bf Lemma}}
\def\m#1#2{\raise 0.2ex\hbox{%         % You can change 0.2
    ${#1_{\displaystyle #2}}$}}
\def\x#1{\raise 0.5ex\hbox{%         % You can change 0.5
    ${#1}$}}
\def\n#1{\vbox to 3mm{\vspace{0mm}\vfill \hbox to 4.5mm{\hfill
             $#1$\hfill} \vfill }}
\def\arraystretch{1.0}                % You can change 1.0
\def\x{{\bf x}}
\begin{document}
\title{On uniformly generating  {L}atin squares}
%(v32) }
\author{M. Aryapoor\footnote{Institute for Studies in Theoretical Physics and Mathematics
(IPM), Niavaran Square, Tehran, Iran ({\tt
masood.aryapoor@ipm.ir}).} \ and E. S.
Mahmoodian\footnote{Department of Mathematical Sciences, Sharif
University of Technology, P. O. Box 11155-9415, Tehran, Iran
({\tt emahmood@sharif.edu}).} }
\date{}
\maketitle
\begin{abstract}
By simulating an ergodic Markov chain whose stationary
distribution is uniform over the space of $n \times n$ Latin
squares, Mark T. Jacobson and Peter Matthews~\cite{MR1410617}, \
%\cite{JacMatt96},\ %{MR1410617},
have discussed  elegant methods by which they generate {L}atin
squares with a uniform distribution  (approximately). The central
issue is the construction of ``moves'' that connect the squares.
Most of their lengthy paper is to prove that the associated graph
is indeed connected. We give a short proof of this fact by using
 the  concepts of Latin bitrades.

\end{abstract}

\section{Introduction and preliminaries}

A {\sf Latin square} $L$  of order $n$ is an $n\times n$ array
with entries chosen from an $n$-set  $N$, e.g. $\{1,\dots, n\}$,
in such a way that each element of $N$ occurs precisely once in
each row and column of the array.
A {\sf partial Latin square}
 $P$ of order $n$
 is an $n\times n$ array with entries chosen from an $n$-set $N$,
in such a way that each element  of $N$ occurs at most once in
each row and at most once in each column of the array. Hence
there are cells in the array that may be empty, but the positions
that are filled have been so as to conform with the Latin
property of array. For ease of exposition,
 a partial Latin square $T$ may be
represented as a set of ordered triples:  $\{(i,j;T_{ij})\mid
{\rm  where\  element\ } T_{ij} {\rm \ occurs \  in\ } {\rm \
(nonempty)} \ {\rm cell\ } (i,j) {\rm \ of\ the\  array}\}$.

Let $T$ be a partial Latin square and $L$ a Latin square such
that $T\subseteq L$. Then $T$ is called a {\sf Latin trade}, if
there exists a partial Latin square  $T^{*}$ such that $T^{*}\cap
T=\emptyset$ and $(L\backslash T)\cup T^{*}$ is a Latin square.
We call $T^{*}$ a {\sf disjoint mate} of $T$ and the pair
$\mathcal{T}=(T,T^{*})$ is called a {\sf Latin bitrade}. The {\sf
volume} of a Latin bitrade is the number of its nonempty cells. A
Latin bitrade of volume 4 which is unique (up to isomorphism), is
said to be an {\sf intercalate}. A bitrade
$\mathcal{T}=(T,T^{*})$ may be viewed as a set of positive triples
$T$ and   negative triples $T^{*}$.
\begin{example}
\label{intercalate} The  bitrade $\mathcal{I}=(I,I^*)$, where
$$I=\{(i,j;a),(i
,j';b),(i',j;b),(i',j';a)\}, $$
$$I^*=\{(i,j;b),(i
,j';a),(i',j;a),(i',j';b)\},$$

 is an intercalate. Note that we must have $i\neq i'$, $j\neq j'$
 and $a\neq b$.
 Usually such an intercalate is shown as

\begin{center}
$\mathcal{I}$ \qquad \begin{tabular}
{@{\hspace{1pt}}c@{\hspace{1pt}}|@{\hspace{1pt}}c@{\hspace{1pt}}
 @{\hspace{1pt}}c@{\hspace{1pt}}
%@{\hspace{1pt}}c@{\hspace{1pt}}
 @{\hspace{1pt}}c@{\hspace{1pt}}@{\hspace{1pt}}c@{\hspace{1pt}}
 @{\hspace{1pt}}c@{\hspace{1pt}}@{\hspace{1pt}}c@{\hspace{1pt}}
 @{\hspace{1pt}}c@{\hspace{1pt}}@{\hspace{1pt}}c@{\hspace{1pt}}
}
\n{}     &\n{}         &  \n{j}     & \n{}        & \n{j'}  & \n{}  & \n{} \\
%\hspace{3mm}
\hline
\n{}       &\n{.}   & \n{.}         &  \n{\dots}        &\n{.}         &  \n{.}      \\
\n{i}      &\n{.}   & \m{a}{b}      &  \n{\dots}        & \m{b}{a}     &  \n{.}      \\
\n{ }      &\n{.}   & \n{.}         &  \n{\dots}        &\n{.}         &  \n{.}     \\
\n{i'}     &\n{.}   & \m{b}{a}      &  \n{\dots}        & \m{a}{b}     &  \n{.}         \\
\n{}       &\n{.}   & \n{.}         &  \n{\dots}        & \n{.}        &  \n{.}        \\

\end{tabular}
\end{center}

where the elements of $I^*$ are written as subscripts in the
 same array as $I$.

\end{example}
For a recent survey on Latin bitrades see~\cite{MR2453264} and
also~\cite{MR2048415}.

 In~\cite{MR1410617} the  approach for generating Latin squares is based on the fact that an $n \times n$ Latin square
 is equivalent to an $n \times n \times n$
contingency (proper) table in which each line sum equals 1. They
relax the nonnegativity condition on the table's cells, allowing
``improper'' tables that have a single $-1$-cell. A simple set of
moves connects this expanded space of tables [the diameter of the
associated graph is bounded by $2(n - 1)^3$] and suggests a
Markov chain whose subchain of proper tables has the desired
uniform stationary distribution. By grouping these moves
appropriately, they derive a class of moves that stay within the
space of proper Latin squares.

An {\sf improper Latin square} is an $n \times n$ array such that  each cell has a single
 symbol, except for {\em one} improper cell (in the improper
row and column) which has three (the improper symbol appears
there with a -1 coefficient). Each symbol appears exactly once in
each row and in each column, except in the improper row (and also
in the improper column) where one of the symbols appears twice as
``positive'' and once as ``negative''. An improper Latin square
may be viewed as a set of $n^2+1$ positive triples and one
negative triple.

\begin{example}
\label{improperls4} The following array is an improper Latin
square of order $4$.

\begin{center}
$L \quad $ $\begin{array}
{@{\hspace{3.5pt}}c@{\hspace{3.5pt}}@{\hspace{3.5pt}}c@{\hspace{3.5pt}}
 @{\hspace{3.5pt}}c@{\hspace{3.5pt}}@{\hspace{3.5pt}}c@{\hspace{3.5pt}}
 }
%{|@{\hspace{3.5pt}}c@{\hspace{3.5pt}}|@{\hspace{3.5pt}}c@{\hspace{3.5pt}}
% |@{\hspace{3.5pt}}c@{\hspace{3.5pt}}|@{\hspace{3.5pt}}c@{\hspace{3.5pt}}
% |}
%\hline
  c    &  b     &   d    &   a    \\
  b    &  d     &   a    &   c    \\
  d    &  a+c-b &   b    &   b    \\
  a    &  b     &   c    &   d    \\
\end{array}$%{tabular}
\end{center}

Using the notation of Latin bitrades, we may show this improper
Latin square  by

\begin{center}
$L \quad $ $\begin{array}
{@{\hspace{3.5pt}}c@{\hspace{3.5pt}}@{\hspace{3.5pt}}c@{\hspace{3.5pt}}
 @{\hspace{3.5pt}}c@{\hspace{3.5pt}}@{\hspace{3.5pt}}c@{\hspace{3.5pt}}
 }
%{|@{\hspace{3.5pt}}c@{\hspace{3.5pt}}|@{\hspace{3.5pt}}c@{\hspace{3.5pt}}
% |@{\hspace{3.5pt}}c@{\hspace{3.5pt}}|@{\hspace{3.5pt}}c@{\hspace{3.5pt}}
% |}
%\hline
  c    &  b          &   d    &   a    \\
  b    &  d          &   a    &   c    \\
  d    &  a+\m{c}{b} &   b    &   b    \\
  a    &  b          &   c    &   d    \\
\end{array}$%{tabular}
\end{center}

\end{example}

The notion of  {\sf $\pm 1$-move} is introduced   in
~\cite{MR1410617}. Using the notation of Latin bitrades,  a {\sf
$\pm 1$-move}  means adding some appropriate intercalate  to a
given proper or improper Latin square such that the result is a proper or improper Latin square.
If the added intercalate
is the intercalate ${\cal I} $ in Example~\ref{intercalate}, the
corresponding  $\pm 1$-move is called   a {\sf
$((i,j;a),(i',j';b))$-move}.
\begin{example}
\label{ls4} By applying the $((1,2;a),(3,4;b))$-move   to the
improper Latin square $L$ in Example~\ref{improperls4}, we obtain
the following Lain square.
\end{example}

\begin{center}
$L' \quad $ $\begin{array}
{@{\hspace{3.5pt}}c@{\hspace{3.5pt}}@{\hspace{3.5pt}}c@{\hspace{3.5pt}}
 @{\hspace{3.5pt}}c@{\hspace{3.5pt}}@{\hspace{3.5pt}}c@{\hspace{3.5pt}}
 }
%{|@{\hspace{3.5pt}}c@{\hspace{3.5pt}}|@{\hspace{3.5pt}}c@{\hspace{3.5pt}}
% |@{\hspace{3.5pt}}c@{\hspace{3.5pt}}|@{\hspace{3.5pt}}c@{\hspace{3.5pt}}
% |}
%\hline
  c    &  a     &   d    &   b    \\
  b    &  d     &   a    &   c    \\
  d    &  c     &   b    &   a    \\
  a    &  b     &   c    &   d    \\
\end{array}$%{tabular}
\end{center}

Let $G=(V,E)$ be a graph whose vertices are associated to $S$,
the set of all proper and improper Latin squares of order $n$, and
two vertices $L$ and $L'$ are adjacent if there is a $\pm 1$-move
transferring $L$ to $L'$. In the next section we state the results
which prove that $G$ is connected. This approach is developed
from a linear algebraic approach to the concept of Latin bitrades,
which is detailed in the
references~\cite{KhanbanMahdianMahmoodian},
~\cite{MahmoodinNajafian} and~\cite{MR1880972}.

%%%%%%%%%%%%%%%%%%%%%%%%%%%%%%%%%%%%%%%%%%%%%%%%%%%%%%%%%%%%%%%%%%%%%
%%%%%%%%%%%%%%%%%%%%%%%%%%%%              %%%%%%%%%%%%%%%%%%%%%%%%%%%
%%%%%%%%%%%%%%%%%%%%%%%%%%%%    Section 2   %%%%%%%%%%%%%%%%%%%%%%%%%
%%%%%%%%%%%%%%%%%%%%%%%%%%%%              %%%%%%%%%%%%%%%%%%%%%%%%%%%
%%%%%%%%%%%%%%%%%%%%%%%%%%%%%%%%%%%%%%%%%%%%%%%%%%%%%%%%%%%%%%%%%%%%%
%
\section{Connectivity of graph $G$}
In this section we prove that the graph $G$ (defined in the last
section) is connected. First we need a few lemmas. The first
lemma states that an improper Latin square can be transferred into
a proper Latin square using $\pm 1$-moves with changes only in
two rows.
\begin{lemma}\label{improper-proper}
Suppose that we have the following improper Latin square
\begin{center}
\def\arraystretch{0.8}
$A$ \qquad
$\begin{array}%{tabular}
{@{\hspace{7pt}}c@{\hspace{7pt}}|@{\hspace{7pt}}c@{\hspace{7pt}}
% @{\hspace{7pt}}c@{\hspace{7pt}}
% @{\hspace{7pt}}c@{\hspace{7pt}}
%  @{\hspace{7pt}}c@{\hspace{7pt}}
 @{\hspace{7pt}}c@{\hspace{7pt}}
 @{\hspace{7pt}}c@{\hspace{7pt}}@{\hspace{7pt}}c@{\hspace{7pt}}
}
\n{}     &\n{}     & \n{j}         &\n{} \\ \hline
\n{}     &\n{.}    & \n{.}         &  \n{.}    \\
\n{i_1}  &\n.      & \n{a}+\m{b}{s}&  \n.      \\
\n{}     &\n{.}    & \n{.}         &  \n{.}    \\
\n{i_2}  &\n{.}    & \n{s}         &  \n{.}    \\
\n{}     &\n{.}    & \n{.}         &  \n{.}    \\
\end{array}$%{tabular}
\end{center}
Then there is a sequence of (at most $\frac{n-1}{2}$) $\pm
1$-moves involving only rows $i_1$ and $i_2$ which transfers $A$
to a proper Latin square.
\end{lemma}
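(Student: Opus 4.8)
The plan is to resolve the defect by repeatedly applying a single type of $\pm 1$-move confined to rows $i_1$ and $i_2$, and to bound the number of moves by tracking a cycle determined by these two rows. Write $f_k(c)$ for the symbol in cell $(i_k,c)$, so that row $i_2$ is a genuine permutation of the symbols while row $i_1$ carries the lone defect $a+b-s$ at column $j$ and, as a consequence of the improper-row structure, carries the symbol $s$ positively in exactly two further cells. Observe first that $s\notin\{a,b\}$, since otherwise the defect would cancel at the cell and it would already be proper.

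The elementary move is the following. Given an improper cell $a+b-t$ at $(i_1,j)$ with $f_2(j)=t$, choose a column $c\neq j$ with $f_1(c)=t$ and apply the intercalate on rows $i_1,i_2$ and columns $j,c$ built from the two symbols $t$ and $u:=f_2(c)$; this adds $+t-u$ at the cell $(i_1,j)$. If $u\in\{a,b\}$, the $+t$ cancels the defect $-t$ and the $-u$ cancels the matching positive symbol, so the cell collapses to the single symbol $\{a,b\}\setminus\{u\}$ and the square becomes proper. If $u\notin\{a,b\}$, the cell becomes $a+b-u$ with $f_2(j)=u$, i.e.\ we again obtain an improper square of exactly the same shape, but with the defect symbol $t$ replaced by $u$. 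Checking that each such switch genuinely yields a valid proper or improper Latin square, so that rows $i_1,i_2$ and the two affected columns remain consistent, is the first technical step.

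To organize and count the moves, consider the map $\pi(c)=f_2^{-1}(f_1(c))$ on columns, which sends $c$ to the column where row $i_2$ holds the symbol that row $i_1$ holds at $c$. Away from column $j$ this map is a bijection; at $j$ it branches, having as its two images the columns carrying $a$ and $b$ in row $i_2$, and as its two preimages the two columns carrying $s$ in row $i_1$. Hence the columns involved split into two closed walks $A$ and $B$ through $j$, meeting only at $j$, one running (via $\pi$) from $j$ toward the column of $a$ and the other toward the column of $b$. Since $A$ and $B$ share only $j$ and live among the $n$ columns, their lengths satisfy $|A|+|B|\le n+1$.

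The claim is then that committing to one of these cycles and iterating the elementary move walks the defect one column at a time along that cycle, each move shrinking the chosen cycle by one, until the step at which row $i_2$ first presents $a$ or $b$ — the move that makes the square proper. Choosing the shorter cycle, the number of moves equals its number of non-$j$ columns, namely $\min(|A|,|B|)-1\le \tfrac{|A|+|B|}{2}-1\le \tfrac{n-1}{2}$, and since every move is an intercalate on rows $i_1,i_2$ only, no other row is disturbed. The crux, and the step I expect to be the main obstacle, is to prove that the iterated move really does trace a simple, non-backtracking walk along the fixed cycle and halts precisely upon first meeting $a$ or $b$, rather than stalling, looping, or creating a defect elsewhere; once this correspondence is in place the $\tfrac{n-1}{2}$ bound is immediate.
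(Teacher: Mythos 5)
Your proposal is correct and follows essentially the same route as the paper: the paper's proof likewise splits the columns into two chains through column $j$ (one attached to $a$, one to $b$) that meet only at $j$, concludes that the shorter one has at most $\frac{n-1}{2}$ non-$j$ columns, and then removes one column per $\pm 1$-move by induction along that chain. The only real difference is which intercalate realizes each step --- the paper's $((i_1,j;s),(i_2,j_r;b))$-move relocates the improper cell into row $i_2$ at every step, whereas your move keeps it fixed at $(i_1,j)$ and only updates the negative symbol --- and the ``crux'' you flag does go through by exactly the computation you outline (it is the same assertion the paper states without proof as ``the move decreases $r$'').
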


\begin{proof}
It is easy to see that we can find the following cyclic pattern
lying in rows $i_1$ and $i_2$ of $A$ (possibly after permuting
some columns of $A$)
\begin{center}
$A$ \quad
\def\arraystretch{0.8}
\begin{tabular}
{@{\hspace{7pt}}c@{\hspace{7pt}}|@{\hspace{7pt}}c@{\hspace{7pt}}
 @{\hspace{7pt}}c@{\hspace{7pt}}
 @{\hspace{7pt}}c@{\hspace{7pt}}
  @{\hspace{7pt}}c@{\hspace{7pt}}
 @{\hspace{7pt}}c@{\hspace{7pt}}
 @{\hspace{7pt}}c@{\hspace{7pt}}@{\hspace{7pt}}c@{\hspace{7pt}}
 @{\hspace{7pt}}c@{\hspace{7pt}}
 @{\hspace{7pt}}c@{\hspace{7pt}}
} \n{}     &\n{}     & \n{j}   &  \n{j_1}     & \n{j_{2}} &
\n{j_{3}} & \n{\dots} & \n{j_{r-1}}   & \n{j_r} &\n{}  \\ \hline
\n{}     &\n{.}    & \n{.}     &  \n{.}    & \n{.}  &\n{.}        & \n{\dots}  &  \n{.}     &\n{.}&\n{.} \\
\n{i_1}  &\n{.}     & \n{a}+\m{b}{s}        &  \n{t}     &\n{u} &\n{v}  & \n{\dots}   &  \n{z}   &\n{s} &\n{.}  \\
\n{}     &\n{.}    & \n{.}     &  \n{.}    & \n{.}        &\n{.}  & \n{\dots}  &  \n{.}     &\n{.}&\n{.} \\
\n{i_2}  &\n{.}    & \n{s}     &  \n{b}    & \n{t}      &\n{u} & \n{\dots}  &  \n{y}    &\n{z} &\n{.} \\
\n{}     &\n{.}    & \n{.}     &  \n{.}    & \n{.}       &\n{.}    & \n{\dots}  &  \n{.}     &\n{.}&\n{.} \\
\end{tabular}
\end{center}
where $t,u,...,z\notin \{s,a,b\}$ or $r=1$ (i.e. $t=s$). Note that there is a
similar pattern corresponding to $a$ which has no intersection
with the above pattern except in the $j$th column. Therefore one
of these patterns is at most of length $\frac{n-1}{2}$, and we
may assume that  $r \le \frac{n-1}{2}$. We proceed by induction
on $r$. If $r=1$, then the
$((i_1,j;s),(i_2,j_1;b)$)-move produces a proper Latin square.
Let $r>1$. Then the $((i_1,j;s),(i_2,j_r;b)$)-move decreases $r$.
\end{proof}

In the next lemma we show that one can swap a cycle lying in two
rows using $\pm 1$-moves. In~\cite{MR2108400} it is called a
cycle switch.

\begin{lemma}\label{cycleswap}

Suppose we have the following cyclic pattern in a (proper) Latin
square
\begin{center}
$A$ \qquad
\def\arraystretch{0.8}
\begin{tabular}
{@{\hspace{7pt}}c@{\hspace{7pt}}|@{\hspace{7pt}}c@{\hspace{7pt}}
 @{\hspace{7pt}}c@{\hspace{7pt}}
 @{\hspace{7pt}}c@{\hspace{7pt}}
%  @{\hspace{7pt}}c@{\hspace{7pt}}
 @{\hspace{7pt}}c@{\hspace{7pt}}
 @{\hspace{7pt}}c@{\hspace{7pt}}@{\hspace{7pt}}c@{\hspace{7pt}}
 @{\hspace{7pt}}c@{\hspace{7pt}}
 @{\hspace{7pt}}c@{\hspace{7pt}}
}
\n{}      & \n{}   &  \n{j_1} & \n{j_{2}} & \n{j_{3}}& \n{\dots} & \n{j_{r-1}}& \n{j_r} &\n{} \\ \hline
\n{}      & \n{.}  &  \n{.}   & \n{.}     & \n{.}    & \n{\dots} & \n{.}      &\n{.}    &\n{.}\\
\n{i_1}   & \n{.}  &\n{s}     & \n{t}     & \n{u}    & \n{\dots} & \n{y}      &\n{z}    &\n{.}\\
\n{}      & \n{.}  &  \n{.}   & \n{.}     & \n{.}    & \n{\dots} & \n{.}      &\n{.}    &\n{.}\\
\n{i_2}   & \n{.}  &  \n{t}   & \n{u}     & \n{v}    & \n{\dots} & \n{z}      &\n{s}    &\n{.}\\
\n{}      & \n{.}  &  \n{.}   & \n{.}     & \n{.}    & \n{\dots} & \n{.}      &\n{.}    &\n{.}\\
\end{tabular}
\end{center}
Then there is a sequence of (exactly $r-1$) $\pm 1$-moves acting
only on the  entries  shown above which transfers $A$ to
\begin{center}
\def\arraystretch{0.8}
\begin{tabular}
{@{\hspace{7pt}}c@{\hspace{7pt}}|@{\hspace{7pt}}c@{\hspace{7pt}}
 @{\hspace{7pt}}c@{\hspace{7pt}}
 @{\hspace{7pt}}c@{\hspace{7pt}}
 @{\hspace{7pt}}c@{\hspace{7pt}}
 @{\hspace{7pt}}c@{\hspace{7pt}}
% @{\hspace{7pt}}c@{\hspace{7pt}}
@{\hspace{7pt}}c@{\hspace{7pt}}
 @{\hspace{7pt}}c@{\hspace{7pt}}
 @{\hspace{7pt}}c@{\hspace{7pt}}
}
\n{}   & \n{}   &\n{j_1}   & \n{j_{2}}& \n{j_{3}} & \n{\dots}  & \n{j_{r-1}}& \n{j_r} &\n{} \\ \hline
\n{}   & \n{.}  &\n{.}     & \n{.}    & \n{.}     & \n{\dots}  &  \n{.}     &\n{.}    &\n{.}\\
\n{i_1}& \n{.}  &\n{t}     & \n{u}    & \n{v}     & \n{\dots}  &  \n{z}     &\n{s}    &\n{.}\\
\n{}   & \n{.}  & \n{.}    & \n{.}    & \n{.}     & \n{\dots}  &  \n{.}     &\n{.}    &\n{.}\\
\n{i_2}& \n{.}  & \n{s}    & \n{t}    & \n{u}     & \n{\dots}  &  \n{y}     &\n{z}    &\n{.}\\
\n{}   & \n{.}  & \n{.}    & \n{.}    & \n{.}     & \n{\dots}  &  \n{.}     &\n{.}    &\n{.}\\
\end{tabular}
\end{center}

\end{lemma}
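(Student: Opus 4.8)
The plan is to \emph{not} induct on $r$ directly (a single $\pm1$-move out of a proper cyclic pattern is forced to leave the improper world, so Lemma~\ref{improper-proper}'s proper starting point cannot recurse on itself); instead I would spend one preliminary move to land \emph{exactly} on the configuration of Lemma~\ref{improper-proper}, and then argue that the merely ``proper'' square delivered by that lemma is forced to be the displayed target. Write $\sigma_1=s,\sigma_2=t,\sigma_3=u,\dots,\sigma_r=z$, so the hypothesis reads $(i_1,j_k)=\sigma_k$ and $(i_2,j_k)=\sigma_{k+1}$ for $1\le k\le r$ (indices cyclic, $\sigma_{r+1}=\sigma_1$), while the target is $(i_1,j_k)=\sigma_{k+1}$ and $(i_2,j_k)=\sigma_k$. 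The base case $r=2$ is immediate: the four relevant cells carry $s,t$ in row $i_1$ and $t,s$ in row $i_2$, a genuine intercalate, and the single $((i_1,j_1;t),(i_2,j_2;s))$-move switches it to the target while staying proper; this is $r-1=1$ move.

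For $r\ge 3$ I would apply that same $((i_1,j_1;t),(i_2,j_2;s))$-move first. Computing the added intercalate shows it sends $(i_1,j_1)\mapsto t$ and $(i_2,j_1)\mapsto s$ (so column $j_1$ already reaches its target), $(i_1,j_2)\mapsto s$, and $(i_2,j_2)\mapsto t+u-s$, while leaving all other cells fixed. Since $s,t,u$ are distinct the result is an improper Latin square whose unique improper cell is $(i_2,j_2)$. The key observation is that, read on rows $i_1,i_2$ and columns $j_2,j_3,\dots,j_r$, this is precisely an instance of Lemma~\ref{improper-proper}: the improper cell $t+u-s$ plays the role of the entry $a+b-s$ (with positive symbols $t,u$ and improper symbol $s$), the cell $(i_1,j_2)=s$ plays the role of the $s$ beneath it, and the cyclic pattern on the remaining columns $j_3,\dots,j_r$ has parameter $r-2$. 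Hence Lemma~\ref{improper-proper} provides $r-2$ further $\pm1$-moves, confined to rows $i_1,i_2$ and columns $j_2,\dots,j_r$, ending at a proper array; the running total is $1+(r-2)=r-1$.

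It remains to confirm that this proper array is the target and not some other completion, and this is the step I expect to be the main obstacle, since Lemma~\ref{improper-proper} only promises ``a proper Latin square.'' I would settle it by a uniqueness argument that sidesteps the internal choices of that lemma. Every move used lies in rows $i_1,i_2$ and columns $j_1,\dots,j_r$, so the other rows are never altered; therefore in the final proper array the unordered pair $\{(i_1,j_k),(i_2,j_k)\}$ must equal $\{\sigma_k,\sigma_{k+1}\}$ for each $k$ (the two symbols absent from column $j_k$ among the untouched rows), and row $i_1$ restricted to $j_2,\dots,j_r$ must use exactly $\{\sigma_1,\sigma_3,\sigma_4,\dots,\sigma_r\}$ (the complement in $\{\sigma_1,\dots,\sigma_r\}$ of the symbol $\sigma_2$ now occupying $(i_1,j_1)$). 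Reading the constraints $(i_1,j_k)\in\{\sigma_k,\sigma_{k+1}\}$ greedily from $k=2$ upward forces $(i_1,j_2)=\sigma_3$, then $(i_1,j_3)=\sigma_4$, and so on to $(i_1,j_r)=\sigma_1$, i.e. $(i_1,j_k)=\sigma_{k+1}$ throughout, whence $(i_2,j_k)=\sigma_k$; this is exactly the target. The only genuine bookkeeping risk is aligning the labels of the preliminary move with Lemma~\ref{improper-proper}'s template (its columns $j,j_1,\dots$ may need relabelling), but the uniqueness step makes the conclusion insensitive to any such relabelling.
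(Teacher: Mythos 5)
Your proposal is correct and takes essentially the same route as the paper: the paper also settles $r=2$ with the single $((i_1,j_1;t),(i_2,j_2;s))$-move and, for $r\ge 3$, applies that same move first and then finishes by ``the method in the proof of Lemma~\ref{improper-proper}'' applied to the resulting improper square, following the cyclic pattern that remains in columns $j_2,\dots,j_r$. Your closing uniqueness argument---needed because Lemma~\ref{improper-proper} as stated only promises \emph{some} proper square, and its shorter chain through column $j_1$ would merely undo the first move---makes rigorous and explicit the identification of the final square with the displayed target, a point the paper asserts without proof.
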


\begin{proof}
If $r=2$ (i.e. $u=s$), then the $((i_1,j_1;t),(i_2,j_2;s)$)-move does the
job. So let $r\ge 3$. Then the $((i_1,j_1;t),(i_2,j_2;s)$)-move
transfers $A$ to
\begin{center}
\def\arraystretch{0.8}
\begin{tabular}
{@{\hspace{7pt}}c@{\hspace{7pt}}|@{\hspace{7pt}}c@{\hspace{7pt}}
 @{\hspace{7pt}}c@{\hspace{7pt}}
 @{\hspace{7pt}}c@{\hspace{7pt}}
  @{\hspace{7pt}}c@{\hspace{7pt}}
 @{\hspace{7pt}}c@{\hspace{7pt}}
 @{\hspace{7pt}}c@{\hspace{7pt}}@{\hspace{7pt}}c@{\hspace{7pt}}
 @{\hspace{7pt}}c@{\hspace{7pt}}
 @{\hspace{7pt}}c@{\hspace{7pt}}
} \n{}           & \n{}   &  \n{j_1}     & \n{j_{2}}  & \n{j_{3}}&
\n{\dots} & \n{j_{r-1}}   & \n{j_r} &\n{}  \\ \hline
\n{}          & \n{.}     &  \n{.}    & \n{.}    &  \n{.}          & \n{\dots}  &  \n{.}     &\n{.}&\n{.}  \\
\n{i_1}        & \n{.}  &  \n{t}     & \n{s} &  \n{u} & \n{\dots}   &  \n{y}   &\n{z}         &\n{.}   \\
\n{}         & \n{.}     &  \n{.}    & \n{.}   &  \n{.}           & \n{\dots}  &  \n{.}     &\n{.}&\n{.}  \\
\n{i_2}       & \n{.}     &  \n{s}    & \n{u}+\m{t}{s} &  \n{v}  & \n{\dots}  &  \n{t}    &\n{s} &\n{.}  \\
\n{}         & \n{.}     &  \n{.}    & \n{.}     &  \n{.}         & \n{\dots}  &  \n{.}     &\n{.}&\n{.}  \\
\end{tabular}
\end{center}
Now by applying the method in the proof of  Lemma~\ref{improper-proper}, this improper Latin square can be
transferred to the desired Latin square.
\end{proof}
%
%\end{document}

The last lemma is a crucial lemma. It tells us that we can switch two
entries in a row of an improper Latin square using a sequence of controlled $\pm 1$-moves.

\begin{lemma}\label{ablemma}
Suppose for given $s$ and $t$ $ \in \{1,2, \ldots, n\}$ we have the following improper Latin square:
\begin{center}
$A$ \qquad
\def\arraystretch{0.8}
$\begin{array}%{tabular}
{@{\hspace{7pt}}c@{\hspace{7pt}}|@{\hspace{7pt}}c@{\hspace{7pt}}
 @{\hspace{7pt}}c@{\hspace{7pt}}
% @{\hspace{7pt}}c@{\hspace{7pt}}
%  @{\hspace{7pt}}c@{\hspace{7pt}}
 @{\hspace{7pt}}c@{\hspace{7pt}}
 @{\hspace{7pt}}c@{\hspace{7pt}}@{\hspace{7pt}}c@{\hspace{7pt}}
}
\n{}     &\n{}     & \n{j_1}        &  \n{}     & \n{j_2} & \n{} \\ \hline
\n{}     &\n{.}    & \n{.}          &  \n{.}    & \n{.}   & \n{.}\\
\n{i_1}  &\n.      & \n{s}          &  \n.      & \n{t}   & \n.  \\
\n{}     &\n{.}    & \n{.}          &  \n{.}    & \n{.}   & \n{.}\\
\n{i_2}  &\n{.}     & \n{a}+\m{b}{s}&  \n{.}    & \n{.}   & \n{.}\\
\n{}     &\n{.}    & \n{.}          &  \n{.}    & \n{.}   & \n{.}\\
\n{i_3}  &\n{.}    & \n{.}          &  \n{.}    & \n{s}   & \n{.}\\
\end{array}$%{tabular}
\end{center}
where $i_2$ may be equal to $i_3$. Then there is a sequence of
(at most $2(n-1)$) $\pm 1$-moves transferring this square to an
improper (or proper) Latin square $A'$ of the following form:
\begin{center}
$A'$ \qquad
\def\arraystretch{0.8}
\begin{tabular}
{@{\hspace{7pt}}c@{\hspace{7pt}}|@{\hspace{7pt}}c@{\hspace{7pt}}
 @{\hspace{7pt}}c@{\hspace{7pt}}
% @{\hspace{7pt}}c@{\hspace{7pt}}
%  @{\hspace{7pt}}c@{\hspace{7pt}}
 @{\hspace{7pt}}c@{\hspace{7pt}}
 @{\hspace{7pt}}c@{\hspace{7pt}}@{\hspace{7pt}}c@{\hspace{7pt}}
}
\n{}   &\n{}     & \n{j_1}       &  \n{}     & \n{j_2} & \n{}  \\ \hline
\n{}   &\n{.}    & \n{.}         &  \n{.}    & \n{.}   & \n{.} \\
\n{i_1}&\n.      & \n{t}         &  \n.      & \n{s}   & \n.   \\
\n{}   &\n{.}    & \n{.}         &  \n{.}    & \n{.}   & \n{.} \\
\n{i'} &\n{.}    & \n{e}+\m{f}{t}&  \n{.}    & \n{.}   & \n{.} \\
\n{}   &\n{.}    & \n{.}         &  \n{.}    & \n{.}   & \n{.} \\
\end{tabular}
\end{center}
where $i'=i_2$ or $i_3$, and  the only possibly different entries
of $A$ and $A'$ are entries in: $(i_1,j_1)$, $(i_1,j_2)$ and
those in rows $i_2$ and $i_3$.
\end{lemma}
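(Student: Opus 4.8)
The plan is to isolate a single ``finishing'' move and reduce everything else to preparing the board for it. Since every intercalate occupies two rows and two columns, any $\pm1$-move that disturbs row $i_1$ disturbs two of its cells; because the conclusion permits changes in row $i_1$ only at $(i_1,j_1)$ and $(i_1,j_2)$, the move that swaps $s$ and $t$ must use precisely columns $j_1,j_2$ and must couple row $i_1$ with a single second row. For the result to stay legal (at most one negative entry), that second row has to be the one carrying the improper cell in column $j_1$, so that the existing $-s$ is absorbed rather than a fresh negative being created. First I would record this finishing move: with the defect written $a+b-s$ at $(i_2,j_1)$, the $((i_1,j_2;s),(i_2,j_1;t))$-move realises $(i_1,j_1)\colon s\mapsto t$ and $(i_1,j_2)\colon t\mapsto s$, turns the improper cell into $a+b-t$, and alters $(i_2,j_2)$ by $+t-s$. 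Inspecting the four affected cells shows it yields an admissible $A'$ (with $i'=i_2$, $e=a$, $f=b$) exactly when $(i_2,j_2)=s$; if $(i_2,j_2)\neq s$ a second negative entry is spawned there. So the problem reduces to the following: using $\pm1$-moves confined to rows $i_2$ and $i_3$, reach a configuration in which the $-s$ defect sits in column $j_1$ of some row $i'\in\{i_2,i_3\}$ while that same row holds $s$ in column $j_2$; then fire the finishing move and obtain the promised $i'$.

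The base case is $i_2=i_3$, where $(i_2,j_2)=(i_3,j_2)=s$ already, so the finishing move alone transfers $A$ to $A'$ in a single step.

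For $i_2\neq i_3$ the preparatory stage must move the symbol $s$ from $(i_3,j_2)$ into $(i_2,j_2)$ (equivalently, reposition the defect into row $i_3$), and this is where the work lies. The natural tool is a two-row cycle switch in rows $i_2,i_3$ of the kind produced by Lemma~\ref{cycleswap}: tracing the alternating $s$-cycle through column $j_2$ rotates $s$ into $(i_2,j_2)$ without touching row $i_1$ or any row other than $i_2,i_3$. The obstacle is that Lemma~\ref{cycleswap} presupposes a proper square, whereas here an improper cell is already present, so the first move of a stand-alone cycle switch would momentarily introduce a second negative entry and fail to be a legal $\pm1$-move. The device I would use to get around this is to let the pre-existing $-s$ be the working defect of the rotation, that is, to drive the cycle with the very defect-chasing moves from the proof of Lemma~\ref{improper-proper}, applied with $(i_2,j_1)$ as the improper cell; each such move keeps exactly one negative entry in the array and shortens the cycle by one, so legality is preserved throughout.

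The hard part, and the place where the $\pm1$-move condition has to be checked most carefully, is the bookkeeping of this working defect: one must run the chase so that at the end of the preparatory stage the single negative entry has returned to column $j_1$ as a $-s$ cell of the appropriate row (yielding either $(i_2,j_2)=s$ with the defect back at $(i_2,j_1)$, or the mirror configuration with the defect at $(i_3,j_1)$ and $(i_3,j_2)=s$ retained), rather than being stranded where it would block the finishing move. Finally, for the count: exactly as in Lemma~\ref{improper-proper} there are two candidate cycles attached to the two positive occurrences of $s$ in the improper row, and selecting the shorter keeps the preparatory stage to at most $n-1$ moves; the single finishing move then brings the total to at most $2(n-1)$, which is the claimed bound.
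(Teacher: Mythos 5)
Your isolation of the finishing move is correct and coincides with the paper's Case 1, and the reduction---get the $-s$ defect into column $j_1$ of a row that simultaneously holds $s$ in column $j_2$---is exactly the right target. The genuine gap is your claim that this preparation can be carried out by $\pm1$-moves \emph{confined to rows $i_2$ and $i_3$}, driven by the existing defect in the style of Lemma~\ref{improper-proper}. The chasing moves of that lemma are legal only because the negative symbol sits directly opposite the improper cell \emph{inside the working pair of rows} (there, the cell below $a+b-s$ is $s$). Here the cell opposite the defect is $(i_3,j_1)=d$, and nothing forces $d=s$. If $d\neq s$, a move coupling $(i_2,j_1)$ with another column $c$ of rows $i_2,i_3$ either removes the $-s$, in which case $(i_3,j_1)$ becomes the unique improper cell and legality demands a column carrying $s$ in row $i_2$ directly above $a$ or $b$ in row $i_3$; or it keeps the defect at $(i_2,j_1)$, in which case $(i_3,j_1),(i_2,c),(i_3,c)$ must all stay proper, which pins down $c$ and then fails; and a move avoiding column $j_1$ altogether must keep all four of its cells proper, i.e.\ requires an intercalate already lying in rows $i_2,i_3$. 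All of these can fail at once, and then \emph{no} legal $\pm1$-move is confined to rows $i_2,i_3$: your preparatory stage cannot even begin. Concretely, take $n=6$, $(i_1,i_2,i_3)=(1,2,3)$, $(j_1,j_2)=(1,6)$, $s=1$, $t=2$, $a=3$, $b=4$, $d=5$:
\[
\begin{array}{|c|c|c|c|c|c|}
\hline
1 & 3 & 6 & 5 & 4 & 2\\
3{+}4{-}1 & 1 & 2 & 1 & 6 & 5\\
5 & 2 & 4 & 6 & 3 & 1\\
1 & 4 & 3 & 2 & 5 & 6\\
2 & 6 & 5 & 3 & 1 & 4\\
6 & 5 & 1 & 4 & 2 & 3\\
\hline
\end{array}
\]
This satisfies every hypothesis of the lemma, but rows $2,3$ contain no intercalate, the symbol $1$ in row $2$ lies only above $2$ and $6$, and $(3,1)=5\neq 1$; running the short sign check above shows every intercalate placed inside rows $2,3$ creates either a second negative cell or an inadmissible cell (such as $2\cdot4-1$). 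So the conclusion of the lemma is true here, but it is not reachable by any sequence of moves staying in rows $i_2,i_3$.

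This is precisely why the paper's proof leaves those two rows. After one move that carries the defect out of row $i_2$ into row $i_1$ (the $((i_2,j_1;s),(i_1,c_1;b))$-move), it invokes a \emph{fourth} row $i_4$ (the other row holding $b$ in column $c_1$) and uses Lemma~\ref{improper-proper} on rows $i_1,i_4$ to park the defect and make the square proper; only then is the cycle switch of Lemma~\ref{cycleswap} on rows $i_2,i_3$ legal; afterwards the $i_1,i_4$ moves are undone, the defect is pushed back into column $j_1$ (now in row $i_3$), and the finishing move is fired. Rows $i_1$ and $i_4$ end up restored, which is why the lemma can still assert that only $(i_1,j_1)$, $(i_1,j_2)$ and rows $i_2,i_3$ differ---but the temporary excursion through them is not a convenience, it is unavoidable, as the example shows. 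Your move count inherits the same gap, since it budgets only for a single two-row chase plus one finishing move.
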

\begin{proof}
We distinguish two cases.

{\bf Case 1}:   $i_2=i_3$, i.e. in column $j_2$ the symbol $s$
appears in the improper row. So $A$ has the following form:
\begin{center}
$A$ \qquad
\def\arraystretch{0.8}
\begin{tabular}
{@{\hspace{7pt}}c@{\hspace{7pt}}|@{\hspace{7pt}}c@{\hspace{7pt}}
 @{\hspace{7pt}}c@{\hspace{7pt}}
 @{\hspace{7pt}}c@{\hspace{7pt}}
  @{\hspace{7pt}}c@{\hspace{7pt}}
 @{\hspace{7pt}}c@{\hspace{7pt}}
 @{\hspace{7pt}}c@{\hspace{7pt}}@{\hspace{7pt}}c@{\hspace{7pt}}
} \n{}     &\n{}     & \n{j_1}   &  \n{}     & \n{j_2} & \n{}  \\ \hline
\n{}     &\n{.}    & \n{.}     &  \n{.}    & \n{.}              & \n{.}   \\
\n{i_1}  &\n.      & \n{s}     &  \n.      & \n{t}           & \n.   \\
\n{}     &\n{.}    & \n{.}     &  \n{.}    & \n{.}              & \n{.}  \\
\n{i_2}  &\n{.}     & \n{a}+\m{b}{s}  &  \n{.}     & \n{s}  & \n{.}  \\
\n{}     &\n{.}    & \n{.}     &  \n{.}    & \n{.}              & \n{.}  \\
\end{tabular}
\end{center}
Then the  $((i_1,j_1;t),(i_2,j_2;s))$-move  transfers $A$ to:
\begin{center}
$A'$ \qquad
\def\arraystretch{0.8}
\begin{tabular}
{@{\hspace{7pt}}c@{\hspace{7pt}}|@{\hspace{7pt}}c@{\hspace{7pt}}
 @{\hspace{7pt}}c@{\hspace{7pt}}
 @{\hspace{7pt}}c@{\hspace{7pt}}
  @{\hspace{7pt}}c@{\hspace{7pt}}
 @{\hspace{7pt}}c@{\hspace{7pt}}
 @{\hspace{7pt}}c@{\hspace{7pt}}@{\hspace{7pt}}c@{\hspace{7pt}}
} \n{}     &\n{}     & \n{j_1}   &  \n{}     & \n{j_2} & \n{}  \\ \hline
\n{}     &\n{.}    & \n{.}     &  \n{.}    & \n{.}              & \n{.}   \\
\n{i_1}  &\n.      & \n{t}     &  \n.      & \n{s}           & \n.   \\
\n{}     &\n{.}    & \n{.}     &  \n{.}    & \n{.}              & \n{.}  \\
\n{i_2}  &\n{.}     & \n{a}+\m{b}{t}  &  \n{.}     & \n{t}  & \n{.}  \\
\n{}     &\n{.}    & \n{.}     &  \n{.}    & \n{.}              & \n{.}  \\
\end{tabular}
\end{center}
and we are done.

{\bf Case 2}:  $i_2\neq i_3$.

It is easy to see that we can find the following cyclic pattern
lying in rows $i_2$ and $i_3$ of  $A$ (possibly after permuting
some columns of $A$)

\begin{center}
$A$ %\qquad
\def\arraystretch{0.8}
\begin{tabular}
{@{\hspace{7pt}}c@{\hspace{3pt}}|@{\hspace{3pt}}c@{\hspace{7pt}}
 @{\hspace{7pt}}c@{\hspace{7pt}}
 @{\hspace{7pt}}c@{\hspace{7pt}}
  @{\hspace{7pt}}c@{\hspace{7pt}}
  @{\hspace{7pt}}c@{\hspace{7pt}}
 @{\hspace{7pt}}c@{\hspace{7pt}}
 @{\hspace{7pt}}c@{\hspace{7pt}}@{\hspace{7pt}}c@{\hspace{7pt}}
 @{\hspace{7pt}}c@{\hspace{7pt}}
 @{\hspace{7pt}}c@{\hspace{7pt}}
}

\n{}     &\n{}     & \n{j_1}   &  \n{c_1}     &
\n{c_{2}}& \n{c_3}& \n{\dots} &\n{c_{r-1}}   &
\n{c_r}&\n{j_2}&\n{}\\ \hline

\n{}     &\n{.}    &\n{.}           &\n{.} &\n{.} &\n{.}&\n{\dots}   &\n{.}    &\n{.}     &\n{.} &\n{.}\\
\n{i_1}  &\n.      &\n{s}           &\n{c} & \n.  &\n{.}& \n{\dots}  & \n{.}   &  \n.     &\n{t} &\n.  \\
\n{}     &\n{.}    & \n{.}          &\n{.} & \n{.}&\n{.}& \n{\dots}  & \n{.}   &  \n{.}   &\n{.} &\n{.}\\
\n{i_2}  &\n{.}    &\n{a}+\m{b}{s}  &\n{s} & \n{u}&\n{v}& \n{\dots}  & \n{x}   &  \n{y}   &\n{.} &\n{.} \\
\n{}     &\n{.}    &\n{.}           &\n{.} & \n{.}&\n{.}& \n{\dots}  & \n{.}   &  \n{.}   &\n{.} &\n{.}\\
\n{i_3}  &\n{.}    &\n{d}           &\n{u} & \n{v}&\n{w}& \n{\dots}  & \n{y}   &  \n{z}   &\n{s} &\n{.}\\
\n{}     &\n{.}    &\n{.}           &\n{.} & \n{.}&\n{.}& \n{\dots}  & \n{.}   &  \n{.}   &\n{.} &\n{.}\\
\end{tabular}
\end{center}
where $\{u,v,w,\dots,x,y\}\cap\{a,b\}=\emptyset$, but
$z\in\{a,b\}$. Without loss of generality we assume that $z=b$.
Therefore $A$ has the following cyclic pattern

\begin{center}
$A$ \
\def\arraystretch{0.8}
\begin{tabular}
{@{\hspace{7pt}}c@{\hspace{3pt}}|@{\hspace{3pt}}c@{\hspace{7pt}}
 @{\hspace{7pt}}c@{\hspace{7pt}}
 @{\hspace{7pt}}c@{\hspace{7pt}}
  @{\hspace{7pt}}c@{\hspace{7pt}}
  @{\hspace{7pt}}c@{\hspace{7pt}}
 @{\hspace{7pt}}c@{\hspace{7pt}}
 @{\hspace{7pt}}c@{\hspace{7pt}}@{\hspace{7pt}}c@{\hspace{7pt}}
 @{\hspace{7pt}}c@{\hspace{7pt}}
 @{\hspace{7pt}}c@{\hspace{7pt}}
}

\n{}     &\n{}     & \n{j_1}   &  \n{c_1}     & \n{c_{2}}&
\n{c_3}& \n{\dots} &\n{c_{r-1}}   & \n{c_r}&\n{j_2}&\n{}\\ \hline

\n{}     &\n{.}    &\n{.}           &\n{.} &\n{.} &\n{.}&\n{\dots}   &\n{.}    &\n{.}     &\n{.} &\n{.}\\
\n{i_1}  &\n.      &\n{s}           &\n{c} & \n.  &\n{.}& \n{\dots}  & \n{.}   &  \n.     &\n{t} &\n.  \\
\n{}     &\n{.}    & \n{.}          &\n{.} & \n{.}&\n{.}& \n{\dots}  & \n{.}   &  \n{.}   &\n{.} &\n{.}\\
\n{i_2}  &\n{.}    &\n{a}+\m{b}{s}  &\n{s} & \n{u}&\n{v}& \n{\dots}  & \n{x}   &  \n{y}   &\n{.} &\n{.} \\
\n{}     &\n{.}    &\n{.}           &\n{.} & \n{.}&\n{.}& \n{\dots}  & \n{.}   &  \n{.}   &\n{.} &\n{.}\\
\n{i_3}  &\n{.}    &\n{d}           &\n{u} & \n{v}&\n{w}& \n{\dots}  & \n{y}   &  \n{b}   &\n{s} &\n{.}\\
\n{}     &\n{.}    &\n{.}           &\n{.} & \n{.}&\n{.}& \n{\dots}  & \n{.}   &  \n{.}   &\n{.} &\n{.}\\
\end{tabular}
\end{center}
%
%
%where $i_4\notin\{i_1,i_2,i_3\}$.
Then the $((i_2,j_1;s),(i_1,c_1;b))$-move transfers $A$ to:
\begin{center}
$A_1$
\def\arraystretch{0.8}
\begin{tabular}
{@{\hspace{7pt}}c@{\hspace{3pt}}|@{\hspace{3pt}}c@{\hspace{7pt}}
 @{\hspace{7pt}}c@{\hspace{7pt}}
 @{\hspace{7pt}}c@{\hspace{7pt}}
  @{\hspace{7pt}}c@{\hspace{7pt}}
  @{\hspace{7pt}}c@{\hspace{7pt}}
 @{\hspace{7pt}}c@{\hspace{7pt}}
 @{\hspace{7pt}}c@{\hspace{7pt}}@{\hspace{7pt}}c@{\hspace{7pt}}
 @{\hspace{7pt}}c@{\hspace{7pt}}
 @{\hspace{7pt}}c@{\hspace{7pt}}
}

\n{}     &\n{}     & \n{j_1}   &  \n{c_1}     & \n{c_{2}}&
\n{c_3}& \n{\dots} &\n{c_{r-1}}   & \n{c_r}&\n{j_2}&\n{}\\ \hline

\n{}     &\n{.}    &\n{.}    &\n{.}         &\n{.} &\n{.}&\n{\dots}   &\n{.}    &\n{.}     &\n{.} &\n{.}\\
\n{i_1}  &\n.      &\n{b}    &\n{c}+\m{s}{b}& \n.  &\n{.}& \n{\dots}  & \n{.}   &  \n.     &\n{t} &\n.  \\
\n{}     &\n{.}    & \n{.}   &\n{.}         & \n{.}&\n{.}& \n{\dots}  & \n{.}   &  \n{.}   &\n{.} &\n{.}\\
\n{i_2}  &\n{.}    &\n{a}    &\n{b}         & \n{u}&\n{v}& \n{\dots}  & \n{x}   &  \n{y}   &\n{.} &\n{.} \\
\n{}     &\n{.}    &\n{.}    &\n{.}         & \n{.}&\n{.}& \n{\dots}  & \n{.}   &  \n{.}   &\n{.} &\n{.}\\
\n{i_3}  &\n{.}    &\n{d}    &\n{u}         & \n{v}&\n{w}& \n{\dots}  & \n{y}   &  \n{b}   &\n{s} &\n{.}\\
\n{}     &\n{.}    &\n{.}    &\n{.}         & \n{.}&\n{.}& \n{\dots}  & \n{.}   &  \n{.}   &\n{.} &\n{.}\\
\end{tabular}
\end{center}
If $u=b$ (i.e. $r=1$) then the $((i_3,j_1;b),(i_1,c_1;s))$-move
transfers $A_1$ to:
\begin{center}
\def\arraystretch{0.8}
\begin{tabular}
{@{\hspace{7pt}}c@{\hspace{7pt}}|@{\hspace{7pt}}c@{\hspace{7pt}}
 @{\hspace{7pt}}c@{\hspace{7pt}}
@{\hspace{7pt}}c@{\hspace{7pt}}@{\hspace{7pt}}c@{\hspace{7pt}}
@{\hspace{7pt}}c@{\hspace{7pt}}
} \n{}     &\n{}     & \n{j_1}   &    \n{c_1} &\n{j_2} &\n{} \\
\hline
\n{}     &\n{.}    & \n{.}           &  \n{.}    &\n{.} &\n{.}\\
\n{i_1}  &\n.      & \n{s}           & \n{c}     &\n{t}   &\n.  \\
\n{}     &\n{.}    & \n{.}           &\n{.}      &\n{.} &\n{.}\\
\n{i_2}  &\n{.}    & \n{a}           &\n{b}      &\n{.}  &\n{.} \\
\n{}     &\n{.}    & \n{.}           &\n{.}      &\n{.} &\n{.}\\
\n{i_3}  &\n{.}    & \n{d}+\m{b}{s}  &\n{s}      &\n{s} &\n{.}\\
\n{}     &\n{.}    & \n{.}           &\n{.}      &\n{.} &\n{.}\\
\end{tabular}
\end{center}
which reduces the problem to Case 1.
%(note that row $i_4$ remains unchanged).
So we assume that $u\neq b$. Now the symbol $b$ appears once more
as a positive entry in column $c_1$ and another row, say $i_4$:
\begin{center}
$A_1$ %\qquad
\def\arraystretch{0.8}
\begin{tabular}
{@{\hspace{7pt}}c@{\hspace{3pt}}|@{\hspace{3pt}}c@{\hspace{7pt}}
 @{\hspace{7pt}}c@{\hspace{7pt}}
 @{\hspace{7pt}}c@{\hspace{7pt}}
  @{\hspace{7pt}}c@{\hspace{7pt}}
  @{\hspace{7pt}}c@{\hspace{7pt}}
 @{\hspace{7pt}}c@{\hspace{7pt}}
 @{\hspace{7pt}}c@{\hspace{7pt}}@{\hspace{7pt}}c@{\hspace{7pt}}
 @{\hspace{7pt}}c@{\hspace{7pt}}
 @{\hspace{7pt}}c@{\hspace{7pt}}
}

\n{}     &\n{}     & \n{j_1}   &  \n{c_1}     & \n{c_{2}}&
\n{c_3}& \n{\dots} &\n{c_{r-1}}   & \n{c_r}&\n{j_2}&\n{}\\ \hline

\n{}     &\n{.}    &\n{.}    &\n{.}         &\n{.} &\n{.}&\n{\dots}   &\n{.}    &\n{.}     &\n{.} &\n{.}\\
\n{i_1}  &\n.      &\n{b}    &\n{c}+\m{s}{b}& \n.  &\n{.}& \n{\dots}  & \n{.}   &  \n.     &\n{t} &\n.  \\
\n{}     &\n{.}    & \n{.}   &\n{.}         & \n{.}&\n{.}& \n{\dots}  & \n{.}   &  \n{.}   &\n{.} &\n{.}\\
\n{i_2}  &\n{.}    &\n{a}    &\n{b}         & \n{u}&\n{v}& \n{\dots}  & \n{x}   &  \n{y}   &\n{.} &\n{.} \\
\n{}     &\n{.}    &\n{.}    &\n{.}         & \n{.}&\n{.}& \n{\dots}  & \n{.}   &  \n{.}   &\n{.} &\n{.}\\
\n{i_3}  &\n{.}    &\n{d}    &\n{u}         & \n{v}&\n{w}& \n{\dots}  & \n{y}   &  \n{b}   &\n{s} &\n{.}\\
\n{}     &\n{.}    &\n{.}    &\n{.}         & \n{.}&\n{.}& \n{\dots}  & \n{.}   &  \n{.}   &\n{.} &\n{.}\\
\n{i_4}  &\n.      & \n{.}   &\n{b}         & \n{.}& \n. & \n{\dots}  &\n{.}    & \n{.}    &\n{.} &\n.  \\
\n{}     &\n{.}    &\n{.}    &  \n{.}       & \n{.}&\n{.}& \n{\dots}  & \n{.}   &\n{.}     &\n{.} &\n{.}\\
\end{tabular}
\end{center}

where $i_4 \notin\{i_1,i_2,i_3 \}$. By Lemma~\ref{improper-proper}, with a sequence of $\pm 1$-moves only on
rows $i_1$ and $i_4$, we can obtain a proper Latin square $A_2$.
Using Lemma~\ref{cycleswap}, a sequence of $\pm 1$-moves
transfers $A_2$ to
\begin{center}
$A_3$ %\qquad
\def\arraystretch{0.8}
\begin{tabular}
{@{\hspace{7pt}}c@{\hspace{7pt}}|@{\hspace{7pt}}c@{\hspace{7pt}}
 @{\hspace{7pt}}c@{\hspace{7pt}}
 @{\hspace{7pt}}c@{\hspace{7pt}}
  @{\hspace{7pt}}c@{\hspace{7pt}}
  @{\hspace{7pt}}c@{\hspace{7pt}}
 @{\hspace{7pt}}c@{\hspace{7pt}}
 @{\hspace{7pt}}c@{\hspace{7pt}}@{\hspace{7pt}}c@{\hspace{7pt}}
 @{\hspace{7pt}}c@{\hspace{7pt}}
 @{\hspace{7pt}}c@{\hspace{7pt}}
}

\n{}     &\n{}     & \n{j_1}   &  \n{c_1}     & \n{c_{2}}&
\n{c_3}& \n{\dots} &\n{c_{r-1}}   & \n{c_r}&\n{j_2}&\n{}\\ \hline

\n{}     &\n{.}    & \n{.}   &\n{.}         & \n{.}&\n{.}& \n{\dots}  & \n{.}   &  \n{.}   &\n{.} &\n{.}\\
\n{i_2}  &\n{.}    &\n{a}    &\n{u}         & \n{v}&\n{w}& \n{\dots}  & \n{y}   &  \n{b}   &\n{.} &\n{.}\\
\n{}     &\n{.}    &\n{.}    &\n{.}         & \n{.}&\n{.}& \n{\dots}  & \n{.}   &  \n{.}   &\n{.} &\n{.}\\
\n{i_3}  &\n{.}    &\n{d}    &\n{b}         & \n{u}&\n{v}& \n{\dots}  & \n{x}   &  \n{y}   &\n{s} &\n{.} \\
\n{}     &\n{.}    &\n{.}    &\n{.}         & \n{.}&\n{.}& \n{\dots}  & \n{.}   &  \n{.}   &\n{.} &\n{.}\\
\end{tabular}
\end{center}
Now we can undo the sequence of $\pm 1$-moves on rows $i_1$ and
$i_4$ in $A_3$, to obtain the corresponding rows in $A_1$. The
resulting Latin square has the following pattern \\
\begin{center}
$A_4$ %\qquad
\def\arraystretch{0.8}
\begin{tabular}
{@{\hspace{7pt}}c@{\hspace{3pt}}|@{\hspace{3pt}}c@{\hspace{7pt}}
 @{\hspace{7pt}}c@{\hspace{7pt}}
 @{\hspace{7pt}}c@{\hspace{7pt}}
  @{\hspace{7pt}}c@{\hspace{7pt}}
  @{\hspace{7pt}}c@{\hspace{7pt}}
 @{\hspace{7pt}}c@{\hspace{7pt}}
 @{\hspace{7pt}}c@{\hspace{7pt}}@{\hspace{7pt}}c@{\hspace{7pt}}
 @{\hspace{7pt}}c@{\hspace{7pt}}
 @{\hspace{7pt}}c@{\hspace{7pt}}
}

\n{}     &\n{}     & \n{j_1}   &  \n{c_1}     & \n{c_{2}}&
\n{c_3}& \n{\dots} &\n{c_{r-1}}   & \n{c_r}&\n{j_2}&\n{}\\ \hline

\n{}     &\n{.}    &\n{.}    &\n{.}         &\n{.} &\n{.}&\n{\dots}   &\n{.}    &\n{.}     &\n{.} &\n{.}\\
\n{i_1}  &\n.      &\n{b}    &\n{c}+\m{s}{b}& \n.  &\n{.}& \n{\dots}  & \n{.}   &  \n.     &\n{t} &\n.  \\
\n{}     &\n{.}    & \n{.}   &\n{.}         & \n{.}&\n{.}& \n{\dots}  & \n{.}   &  \n{.}   &\n{.} &\n{.}\\
\n{i_2}  &\n{.}    &\n{a}    &\n{u}         & \n{v}&\n{w}& \n{\dots}  & \n{y}   &  \n{b}   &\n{.} &\n{.} \\
\n{}     &\n{.}    &\n{.}    &\n{.}         & \n{.}&\n{.}& \n{\dots}  & \n{.}   &  \n{.}   &\n{.} &\n{.}\\
\n{i_3}  &\n{.}    &\n{d}    &\n{b}         & \n{u}&\n{v}& \n{\dots}  & \n{x}   &  \n{y}   &\n{s} &\n{.}\\
\n{}     &\n{.}    &\n{.}    &\n{.}         & \n{.}&\n{.}& \n{\dots}  & \n{.}   &  \n{.}   &\n{.} &\n{.}\\
\end{tabular}
\end{center}
With the  $((i_3,j_1;b),(i_1,c_1,s))$-move, $A_4$ can be
transfered to
\begin{center}
$A_5$ %\qquad
\def\arraystretch{0.8}
\begin{tabular}
{@{\hspace{7pt}}c@{\hspace{3pt}}|@{\hspace{3pt}}c@{\hspace{7pt}}
 @{\hspace{7pt}}c@{\hspace{7pt}}
 @{\hspace{7pt}}c@{\hspace{7pt}}
  @{\hspace{7pt}}c@{\hspace{7pt}}
  @{\hspace{7pt}}c@{\hspace{7pt}}
 @{\hspace{7pt}}c@{\hspace{7pt}}
 @{\hspace{7pt}}c@{\hspace{7pt}}@{\hspace{7pt}}c@{\hspace{7pt}}
 @{\hspace{7pt}}c@{\hspace{7pt}}
 @{\hspace{7pt}}c@{\hspace{7pt}}
}

\n{}     &\n{}     & \n{j_1}   &  \n{c_1}     & \n{c_{2}}&
\n{c_3}& \n{\dots} &\n{c_{r-1}}   & \n{c_r}&\n{j_2}&\n{}\\ \hline

\n{}     &\n{.}    &\n{.}         &\n{.}  &\n{.} &\n{.}&\n{\dots}   &\n{.}    &\n{.}     &\n{.} &\n{.}\\
\n{i_1}  &\n.      &\n{s}         &\n{c}  &\n.   &\n{.}& \n{\dots}  & \n{.}   &  \n.     &\n{t} &\n.  \\
\n{}     &\n{.}    & \n{.}        &\n{.}  & \n{.}&\n{.}& \n{\dots}  & \n{.}   &  \n{.}   &\n{.} &\n{.}\\
\n{i_2}  &\n{.}    &\n{a}         &\n{u}  & \n{v}&\n{w}& \n{\dots}  & \n{y}   &  \n{b}   &\n{.} &\n{.} \\
\n{}     &\n{.}    &\n{.}         &\n{.}  & \n{.}&\n{.}& \n{\dots}  & \n{.}   &  \n{.}   &\n{.} &\n{.}\\
\n{i_3}  &\n{.}    &\n{d}+\m{b}{s}&\n{s}  & \n{u}&\n{v}& \n{\dots}  & \n{x}   &  \n{y}   &\n{s} &\n{.}\\
\n{}     &\n{.}    &\n{.}         &\n{.}  & \n{.}&\n{.}& \n{\dots}  & \n{.}   &  \n{.}   &\n{.} &\n{.}\\
\end{tabular}
\end{center}
and finally Case 1 finishes the proof. Note that in row $i_1$,
 except the positions  of $s$ and $t$, other positions are unchanged.
\end{proof}

Now we can prove that the graph $G$ is connected.
\begin{theorem}
\label{algorithm} Let $S$ be the set of all proper or improper
Latin  squares of order $n$. Given two Latin  squares of order
$n$, there exists a sequence of $\pm 1$-moves that transfers one
square into the other without leaving $S$. An upper bound on the
length of the shortest such sequence is $2(n-1)^3$.
\end{theorem}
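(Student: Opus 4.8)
The plan is to show that any two \emph{proper} Latin squares of order $n$ are joined by a path in $G$. This already gives connectivity of the whole graph, because Lemma~\ref{improper-proper} joins every improper square to a proper one, so it suffices to connect proper squares to proper squares. Fix proper squares $L$ and $M$; I want to transform $L$ into $M$ by $\pm1$-moves. First I would correct the rows of $L$ one at a time, from top to bottom, so that after stage $k$ the first $k$ rows agree with $M$. Since a Latin square is determined by its first $n-1$ rows (the last row being forced column by column), it is enough to correct rows $1,\dots,n-1$: once these agree with $M$, row $n$ agrees automatically. This is what produces the factor $(n-1)$ rather than $n$ in the bound.

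The engine for correcting a single row is Lemma~\ref{ablemma}, read as the assertion that in an improper Latin square one may transpose the two entries of a chosen row lying in the improper column $j_1$ and in another column $j_2$, at the cost of relocating the improper symbol and altering only two auxiliary rows. To correct row $k$ (with rows $1,\dots,k-1$ already equal to $M$), I would write the rearrangement carrying the current row $k$ onto $M$'s row $k$ as a product of at most $n-1$ transpositions, and realize each transposition by one application of Lemma~\ref{ablemma}: to install the correct symbol $\tau=M_{k,j_1}$ in cell $(k,j_1)$, locate $\tau$ in the current row $k$, say in column $j_2$, and swap $(k,j_1)$ with $(k,j_2)$. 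A single improper cell is threaded through the entire process: it is created once at the outset, each application of Lemma~\ref{ablemma} returns an improper square ready for the next transposition, and it is resolved once at the very end by Lemma~\ref{improper-proper}, at which point the configuration is exactly the proper square $M$.

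For the count, correcting one row costs at most $n-1$ transpositions, each at most $2(n-1)$ moves by Lemma~\ref{ablemma}, and at most $n-1$ rows must be corrected. Hence the total number of $\pm1$-moves is at most $(n-1)\cdot(n-1)\cdot 2(n-1)=2(n-1)^3$, the $O(n)$ overhead for creating and resolving the single improper cell being absorbed into this estimate.

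The step I expect to be the main obstacle is guaranteeing that correcting row $k$ never disturbs the already-corrected rows $1,\dots,k-1$. Besides the two cells of row $k$, Lemma~\ref{ablemma} modifies two auxiliary rows, namely the improper row and the row carrying the displaced symbol in column $j_2$, and these are not freely chosen. The crux is therefore to argue that the auxiliary rows can always be kept among the not-yet-corrected rows $k,\dots,n$. This should follow from the observation that rows $k,\dots,n$ of the current square and of $M$ form Latin rectangles with identical column symbol-sets, so that any column conflict created by an in-row swap is absorbable by the improper cell \emph{within} the unfixed region rather than against a fixed row. Making this confinement precise, by choosing the improper column and the order of the transpositions so that both auxiliary rows stay $\ge k$ throughout, and then verifying that the final resolution via Lemma~\ref{improper-proper} lands precisely on $M$, is where the real work of the proof lies.
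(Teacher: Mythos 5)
Your outline coincides with the paper's own proof: reduce to proper squares via Lemma~\ref{improper-proper}, correct rows from top to bottom, use Lemma~\ref{ablemma} as the in-row swap engine with an improper cell carried along, and count $(n-1)\cdot(n-1)\cdot 2(n-1)=2(n-1)^3$. But the step you defer as ``the real work'' is precisely where your write-up stops short of a proof, so it matters that it is closed by a single short observation, which is exactly the one you sketch: since rows $1,\dots,k-1$ of the current square agree with those of $M$, the symbol $M_{k,j}$ cannot occur in column $j$ of the current square in rows $1,\dots,k-1$; hence, if it is not already in cell $(k,j)$, it occurs there in a row $>k$. This remark gives all three confinement facts at once. (i) The move installing $s=M_{k,j_2}$ into cell $(k,j_2)$ pairs row $k$ with the row $i_1>k$ currently holding $s$ in column $j_2$, so the improper cell is born in a row $>k$, in column $j_1$, with negative symbol equal to the new occupant of $(k,j_1)$. (ii) In each application of Lemma~\ref{ablemma}, the third row needed (the one holding, in its target column, the symbol being installed) has index $>k$ by the same remark, and the lemma's conclusion puts the new improper cell in one of the two old auxiliary rows, hence again in a row $>k$; so the invariant persists. (iii) When the symbol pushed into $(k,j_1)$ is finally $M_{k,j_1}$, the second positive occurrence of that symbol in column $j_1$ lies in a row $>k$, so Lemma~\ref{improper-proper} resolves the improper cell by moves confined to two rows $>k$. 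Rows $1,\dots,k$ are never disturbed, and no cleverness in choosing the improper column is needed or indeed available.

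One genuine correction to your plan: the swaps cannot be an arbitrary transposition decomposition performed in a chosen order. Lemma~\ref{ablemma} requires the improper cell to sit in the column of the entry being moved and to carry exactly that symbol with coefficient $-1$, so successive swaps are forced to chain through the improper column: you must follow the cycles of the permutation carrying row $k$ of the current square to row $k$ of $M$. Each cycle costs one improper-creating move, one application of Lemma~\ref{ablemma} per further position, and one resolution by Lemma~\ref{improper-proper} when the cycle closes; a row whose discrepancy splits into several cycles therefore needs several creations and resolutions, not one improper cell threaded through the whole process. With this ordering your final worry also dissolves: the resolution never has to ``land on $M$,'' it only has to leave rows $1,\dots,k$ alone (it acts on two rows $>k$), since rows $k+1,\dots,n$ are handled by later stages of the induction and row $n$ is forced once rows $1,\dots,n-1$ agree.
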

\begin{proof}
Suppose that $A$ and $B$ are two proper or improper Latin
squares. Without loss of generality we can assume that $A$ and $B$
are proper (see Lemma~\ref{improper-proper}).
%Let $T=A-B$ be the bitrade of their differences.
To prove the theorem, we proceed by induction on the number of
identical rows of $A$ and $B$. Suppose that the first $k-1$ rows
of $A$ and $B$ are equal. We show that we can apply a sequence of
$\pm 1$-moves to $A$ to obtain a Latin square  with  the first $k$
rows identical to the first $k$ rows  of $B$. If the $k$th rows
of $A$ and $B$ are equal then we are done. So suppose that they
are not equal. In this case  we can find the following patterns
in $A$ and $B$ ($s\neq a$)

\begin{center}
$A=$ \
\def\arraystretch{0.8}
\begin{tabular}
{@{\hspace{3pt}}c@{\hspace{3pt}}|@{\hspace{3pt}}c@{\hspace{3pt}}
 @{\hspace{3pt}}c@{\hspace{3pt}}
% @{\hspace{7pt}}c@{\hspace{7pt}}
%  @{\hspace{7pt}}c@{\hspace{7pt}}
 @{\hspace{3pt}}c@{\hspace{3pt}}
 @{\hspace{3pt}}c@{\hspace{3pt}}@{\hspace{3pt}}c@{\hspace{3pt}}
} \n{}    &\n{}     & \n{j_1}   &  \n{}     & \n{j_2}  & \n{} \\ \hline
\n{}     &\n{.}    & \n{.}     &  \n{.}    & \n{.}                 &\n{.} \\
\n{k}    &\n.        & \n{s}     &  \n.      & \n{t}            & \n.  \\
\n{}     &\n{.}    & \n{.}     &  \n{.}    & \n{.}                 &\n{.} \\
\n{i_1}  &\n{.}     & \n{u}  &  \n{.}     & \n{s}     &\n{.}\\
\n{}     &\n{.}    & \n{.}     &  \n{.}    & \n{.}                 &\n{.}%\\
\end{tabular}
%\end{center}
%
%\begin{center}
%\quad
\hspace*{12mm}
$B=$
\def\arraystretch{0.8}
\begin{tabular}
{@{\hspace{3pt}}c@{\hspace{3pt}}|@{\hspace{3pt}}c@{\hspace{3pt}}
 @{\hspace{3pt}}c@{\hspace{3pt}}
% @{\hspace{7pt}}c@{\hspace{7pt}}
%  @{\hspace{7pt}}c@{\hspace{7pt}}
 @{\hspace{3pt}}c@{\hspace{3pt}}
 @{\hspace{3pt}}c@{\hspace{3pt}}@{\hspace{3pt}}c@{\hspace{3pt}}
} \n{}     &\n{}     & \n{j_1}   &  \n{}     & \n{j_2} & \n{} \\ \hline
\n{}     &\n{.}    & \n{.}     &  \n{.}    & \n{.}             &\n{.} \\
\n{k}  &\n.      & \n{a}     &  \n.      & \n{s}              & \n.  \\
\n{}     &\n{.}    & \n{.}     &  \n{.}    & \n{.}             &\n{.} \\
\n{i_1}  &\n{.}     & \n{b}  &  \n{.}     & \n{c}   &\n{.}\\
\n{}     &\n{.}    & \n{.}     &  \n{.}    & \n{.}             &\n{.}%\\
\end{tabular}
\end{center}
Since $A$ and $B$ are proper Latin squares and have the same first
$k-1$ rows, we must have $i_1> k$. Now the
$((k,j_2;s),(i_1,j_1;t))$-move transfers $A$ to
\begin{center}

\def\arraystretch{0.8}
\begin{tabular}
{@{\hspace{7pt}}c@{\hspace{7pt}}|@{\hspace{7pt}}c@{\hspace{7pt}}
 @{\hspace{7pt}}c@{\hspace{7pt}}
% @{\hspace{7pt}}c@{\hspace{7pt}}
%  @{\hspace{7pt}}c@{\hspace{7pt}}
 @{\hspace{7pt}}c@{\hspace{7pt}}
 @{\hspace{7pt}}c@{\hspace{7pt}}@{\hspace{7pt}}c@{\hspace{7pt}}
} \n{}     &\n{}     & \n{j_1}   &  \n{}     & \n{j_2}  & \n{} \\ \hline
\n{}     &\n{.}    & \n{.}     &  \n{.}    & \n{.}                 &\n{.} \\
\n{k}  &\n.      & \n{t}     &  \n.      & \n{s}            & \n.  \\
\n{}     &\n{.}    & \n{.}     &  \n{.}    & \n{.}                 &\n{.} \\
\n{i_1}  &\n{.}     & \n{u}+\m{s}{t}  &  \n{.}     & \n{t}     &\n{.}\\
\n{}     &\n{.}    & \n{.}     &  \n{.}    & \n{.}                 &\n{.}\\

\end{tabular}
\end{center}

which fixes the position of $s$ in row $k$ in both squares.\\
 If $a=t$, then we
can find another entry in column $j_1$ of $A$ which is equal to
$t$ and is not in the first $k$ rows of $A$. So, by applying Lemma~\ref{improper-proper}, we can transfer the above (possibly
improper) Latin square into a proper Latin square (using at most
$\frac{n-1}{2}$, $\pm 1$-moves) without changing the first $k$
rows of $A$. \\
If $a\neq t$, then we can find the following
patterns in $B$   and $A$

\begin{center}
$A=$ \ \
\def\arraystretch{0.8}
\begin{tabular}
{@{\hspace{3pt}}c@{\hspace{3pt}}|@{\hspace{3pt}}c@{\hspace{3pt}}
 @{\hspace{3pt}}c@{\hspace{3pt}}
% @{\hspace{7pt}}c@{\hspace{7pt}}
%  @{\hspace{7pt}}c@{\hspace{7pt}}
 @{\hspace{3pt}}c@{\hspace{3pt}}
 @{\hspace{3pt}}c@{\hspace{3pt}}@{\hspace{3pt}}c@{\hspace{3pt}}
} \n{}     &\n{}     & \n{j_1}   &  \n{}     & \n{j_3}  & \n{} \\
\hline
\n{}     &\n{.}    & \n{.}     &  \n{.}    & \n{.}                 &\n{.} \\
\n{k}  &\n.      & \n{t}     &  \n.      & \n{r}            & \n.  \\
\n{}     &\n{.}    & \n{.}     &  \n{.}    & \n{.}                 &\n{.} \\
\n{i_1}  &\n{.}     & \n{u}+\m{s}{t}  &  \n{.}     & \n{.}     &\n{.}\\
\n{}     &\n{.}    & \n{.}     &  \n{.}    & \n{.}                 &\n{.}\\
\n{i_2}     &\n{.}    & \n{.}     &  \n{.}    & \n{t}           &\n{.}\\
\end{tabular}
%\end{center}
%
%\begin{center}
%\quad
\hspace*{2mm}
$B=$ \
\def\arraystretch{0.8}
\begin{tabular}
{@{\hspace{3pt}}c@{\hspace{3pt}}|@{\hspace{3pt}}c@{\hspace{3pt}}
 @{\hspace{3pt}}c@{\hspace{3pt}}
% @{\hspace{7pt}}c@{\hspace{7pt}}
%  @{\hspace{7pt}}c@{\hspace{7pt}}
 @{\hspace{3pt}}c@{\hspace{3pt}}
 @{\hspace{3pt}}c@{\hspace{3pt}}@{\hspace{3pt}}c@{\hspace{3pt}}
} \n{}     &\n{}     & \n{j_1}   &  \n{}     & \n{j_3} & \n{} \\
\hline
\n{}     &\n{.}    & \n{.}     &  \n{.}    & \n{.}             &\n{.} \\
\n{k}  &\n.      & \n{a}     &  \n.      & \n{t}              & \n.  \\
\n{}     &\n{.}    & \n{.}     &  \n{.}    & \n{.}             &\n{.} \\
\n{i_1}  &\n{.}     & \n{b}  &  \n{.}     & \n{.}   &\n{.}\\
\n{}     &\n{.}    & \n{.}     &  \n{.}    & \n{.}             &\n{.}\\
\n{i_2}     &\n{.}    & \n{.}     &  \n{.}    & \n{d}          &\n{.}\\
\end{tabular}
\end{center}
  Since the first $k-1$ rows of $A$ and $B$ are the same we
must have $i_2> k$. Therefore applying Lemma~\ref{ablemma},
interchanges $t$ and $r$ in row $k$ without any other changes in
row $k$ and the first $k-1$ rows. Applying this process (at most
$n-1$ times) produces a (proper or improper) Latin square $A'$
whose first $k$ rows are identical to those of $B$.  Using Lemma~\ref{improper-proper} (and the fact that $B$ is proper), we can
transfer $A'$ into a proper Latin square with a sequence of (at
most $\frac{n-1}{2}$) $\pm 1$-moves.  This finishes the proof by
induction.
 In order to transfer $A$ to $B$ we need to change
$n-1$ rows of $A$ and for each row we need at most $2(n-1)^2$,
$\pm 1$-moves. Therefore with at most $2(n-1)^3$,  $\pm 1$-moves
we can transfer $A$ to $B$.
\end{proof}

%\vspace*{-7mm}
%%%%%%%%%%%%%%%%%%%%%%%%%%%%%%%%%%%%%%%%%%%%%%%%%%%%%%%%%%%%%%%%%%%%%
%%%%%%%%%%%%%%%%%%%%%%%%%%%%              %%%%%%%%%%%%%%%%%%%%%%%%%%%
%%%%%%%%%%%%%%%%%%%%%%%%%%%%    Section 3   %%%%%%%%%%%%%%%%%%%%%%%%%
%%%%%%%%%%%%%%%%%%%%%%%%%%%%              %%%%%%%%%%%%%%%%%%%%%%%%%%%
%%%%%%%%%%%%%%%%%%%%%%%%%%%%%%%%%%%%%%%%%%%%%%%%%%%%%%%%%%%%%%%%%%%%%
%
\pagebreak
%\newpage
\begin{remark}\textbf{Making moves ``properly''}

In ~\cite{MR1410617}, they introduce moves that stay within the
space of (proper) Latin squares. Such moves are called proper
moves. Using Theorem~\ref{algorithm} and a simple argument, they show that the space of
(proper) Latin squares is connected under these proper moves. So we just explain what
a proper move is in our notation. There are two kinds of proper moves, namely
``two-rowed proper moves'' and ``three-rowed proper moves''. In order to define them, we first
define the corresponding Latin bitrades.
A {\sf two-rowed Latin bitrade} is defined to be a Latin bitrade of the following form:
\begin{center}
\def\arraystretch{0.8}
\begin{tabular}
{@{\hspace{7pt}}c@{\hspace{7pt}}|@{\hspace{7pt}}c@{\hspace{7pt}}
 @{\hspace{7pt}}c@{\hspace{7pt}}
 @{\hspace{7pt}}c@{\hspace{7pt}}
  @{\hspace{7pt}}c@{\hspace{7pt}}
  @{\hspace{7pt}}c@{\hspace{7pt}}
 @{\hspace{7pt}}c@{\hspace{7pt}}
 @{\hspace{7pt}}c@{\hspace{7pt}}@{\hspace{7pt}}c@{\hspace{7pt}}
 @{\hspace{7pt}}c@{\hspace{7pt}}
 @{\hspace{7pt}}c@{\hspace{7pt}}
 @{\hspace{7pt}}c@{\hspace{7pt}}
 @{\hspace{7pt}}c@{\hspace{7pt}}
}

\n{}     &\n{}     & \n{j_1}   &  \n{j_2} & \n{\dots} & \n{j_{r-1}}& \n{j_{r}}
 %&\n{l_{1}} & \n{l_{2}} & \n{\dots}  & \n{l_{s-1}} &\n{l_s}
 &\n{} \\
\hline

\n{}     &\n{.} &\n{.}   &\n{.}     &\n{\dots} &\n{.}       &\n{.}   &\n.  \\
\n{i_1}  &\n.   &\m{a}{b}&\m{b}{c}  &\n{\dots} &\m{y}{z}    &\m{z}{a}&\n.  \\
\n{}     &\n{.} & \n{.}  &\n{.}     &\n{\dots} &\n{.}       &\n{.}   &\n{.}\\
\n{i_2}  &\n{.} &\m{b}{a}&\m{c}{b}  &\n{\dots} &\m{z}{y}    &\m{a}{z}&\n{.}\\
\n{}     &\n{.} &\n{.}   &\n{.}     & \n{\dots}&\n{.}       &\n{.}   &\n{.}\\

\end{tabular}
\end{center}
A {\sf three-rowed Latin bitrade} is a Latin bitrade $T$ with
the following properties:
\begin{enumerate}
\item $T$ has  exactly three nonempty
rows,
\item $T$ is the sum of two-rowed Latin bitrades $T_1$ and $T_2$ such that
there is at least one cell which is nonempty in both $T_1$ and $T_2$.
\end{enumerate}

Finally, a {\sf two-rowed proper move} (resp. {\sf three-rowed proper move})
means adding a
two-rowed Latin bitrade    (resp. three-rowed Latin bitrade)
to a given Latin square provided that the result is still a Latin square.

Another set of proper moves to connect the space of all Latin
squares which is similar to the ones found by Jacobson and Matthews, but certainly found independently, appears in Arthur O.
Pittenger~\cite{MR1448875}. Actually Pittenger's moves, 
 correspond to special kinds of
two-rowed and three-rowed moves, discussed above. 
\end{remark}
\begin{remark}
The Markov chain introduced in~\cite{MR1410617} is not known to
be rapidly mixing (and thus does not have proven efficiency).
Mark T. Jacobson and Peter Matthews~\cite{MR1410617} state that: ``in order to
use either of our Markov chains to generate almost-uniformly
distributed Latin squares, we must know how rapidly the chain
converges to the (uniform) stationary distribution. Of our two
chains, we suspect that the ``improper'' one mixes more rapidly,
in terms of real simulation time: executing a proper move takes
time comparable to that needed to execute an equivalent sequence
of $\pm 1$-moves; substituting an equal number of random $\pm
1$-moves seems likely to mix things up more.''

\end{remark}
\begin{remark}
Randomly generating combinatorial objects is an important problem
in combinatorics. It seems plausible to apply the ideas in this
paper to attack the same problem for some other combinatorial
objects such as STS's. In fact one can define the notion of an
improper
 STS see~\cite{MR2533146}.
\end{remark}
%
%%%%%%%%%%%%%%%%%%%%%%%%%%%%%%%%%%%%%%%%%%%%%%%%%%%%%%%%%%%%%%%%%%%%%
%%%%%%%%%%%%%%%%%%%%%%%%%%%%              %%%%%%%%%%%%%%%%%%%%%%%%%%%
%%%%%%%%%%%%%%%%%%%%%%%%%%%%    Section 3   %%%%%%%%%%%%%%%%%%%%%%%%%
%%%%%%%%%%%%%%%%%%%%%%%%%%%%              %%%%%%%%%%%%%%%%%%%%%%%%%%%
%%%%%%%%%%%%%%%%%%%%%%%%%%%%%%%%%%%%%%%%%%%%%%%%%%%%%%%%%%%%%%%%%%%%%
%
%\begin{section}{Conclusions}

%\end{section}
\textbf{Acknowledgements.} One of the authors (E.S.M.) thanks Amin
Saberi for hospitality of a short stay in Stanford University
when he introduced this problem. The other author (M.A.) was
partly supported by a grant from Sharif University of Technology
(Center of excellence in computational mathematics) and a grant
from IPM. We also thank  Masood Mortezaeefar for checking the
proofs and writing down a computer program for
Theorem~\ref{algorithm}.
%
%\vspace*{-7mm}
%%
\bibliographystyle{plain}
\bibliography{sr331Aryapoor4}

\def\cprime{$'$}
\begin{thebibliography}{1}

\bibitem{MR2533146}
Peter~J. Cameron.
\newblock A generalisation of {$t$}-designs.
\newblock {\em Discrete Math.}, 309(14):4835--4842, 2009.

\bibitem{MR2453264}
Nicholas~J. Cavenagh.
\newblock The theory and application of {L}atin bitrades: a survey.
\newblock {\em Math. Slovaca}, 58(6):691--718, 2008.

\bibitem{MR1880972}
Diane Donovan and E.~S. Mahmoodian.
\newblock An algorithm for writing any {L}atin interchange as a sum of
  intercalates.
\newblock {\em Bull. Inst. Combin. Appl.}, 34:90--98, 2002.
\newblock Corrigendum: {\em Bull. Inst. Combin. Appl.} 37:44, 2003.

\bibitem{MR1410617}
Mark~T. Jacobson and Peter Matthews.
\newblock Generating uniformly distributed random {L}atin squares.
\newblock {\em J. Combin. Des.}, 4(6):405--437, 1996.

\bibitem{MR2048415}
A.~D. Keedwell.
\newblock Critical sets in {L}atin squares and related matters: an update.
\newblock {\em Util. Math.}, 65:97--131, 2004.

\bibitem{KhanbanMahdianMahmoodian}
A.~A. Khanban, M.~Mahdian, and E.~S. Mahmoodian.
\newblock A linear algebraic approach to orthogonal arrays and {L}atin squares.
\newblock {\em Ars Combinatoria}, to appear.

\bibitem{MahmoodinNajafian}
E.~S. Mahmoodian and M.~S. Najafian.
\newblock Possible volumes of $t$-$(v,t+1)$ {L}atin trades.
\newblock {\em Ars Combinatoria}, to appear.

\bibitem{MR1448875}
Arthur~O. Pittenger.
\newblock Mappings of {L}atin squares.
\newblock {\em Linear Algebra Appl.}, 261:251--268, 1997.

\bibitem{MR2108400}
Ian~M. Wanless.
\newblock Cycle switches in {L}atin squares.
\newblock {\em Graphs Combin.}, 20(4):545--570, 2004.

\end{thebibliography}
\end{document}